\documentclass[12pt,reqno]{amsart}

\usepackage[T2A]{fontenc}
\usepackage[utf8]{inputenc}
\usepackage[english]{babel}
\usepackage{amsmath,amssymb,amsthm,mathtools}
\usepackage{cite}

\newtheorem{theorem}{THeorem}[section]
\newtheorem{lemma}[theorem]{Lemma}

\newcommand{\End}{\operatorname{End}}
\newcommand{\Sym}{\operatorname{Sym}}
\newcommand{\Span}{\operatorname{span}}

\begin{document}

\title{The Action of the Lie Algebra $\mathfrak{sl}_n$ on Colored Graphs and Multicolored Johnson Graphs}
\author{Leonid Bedratyuk}
\address{ Khmelnytsky National University, Faculty of Information Technology, Ukraine}
\email{leonidbedratyuk@khmnu.edu.ua}

\begin{abstract}
We consider the space of $(n-1)$-colored graphs on a fixed set of
$N$ vertices. Each edge position of the complete graph $K_N$ has $n$
possible states: the absence of an edge and $n-1$ colors. This gives a
natural identification of the space of such graphs with the tensor power
$(\mathbb C^n)^{\otimes m}$, where $m=\binom N2$, and defines on it the
diagonal action of the Lie algebra $\mathfrak{gl}_n$, and, after
restriction, the action of $\mathfrak{sl}_n$.
For a fixed profile
$\alpha=(\alpha_0,\dots,\alpha_{n-1})$, we consider the graph
$J(m;\alpha)$ whose vertices are colored graphs of this profile and whose
adjacency is defined by a single exchange of states in two edge positions.
This graph is the transposition graph on the set of words with fixed
profile, also known as the \emph{multislice}.
The main result is an expression of the adjacency operator in terms of
the root operators of $\mathfrak{sl}_n$ and a derivation of its spectrum
by means of the quadratic Casimir operator of $\mathfrak{gl}_n$ and the
Schur--Weyl decomposition. It is proved that the adjacency operator belongs
to the center of the algebra $\End_{S_m}(\mathcal C_\alpha)$. The
contribution of each spectral block to the multiplicity of the
corresponding eigenvalue is described in terms of a Kostka number and the
dimension of a Specht module.
For $n=2$, one obtains the classical Johnson graph and its known spectrum.
As applications, a formula for the valency is established, connectivity is
proved, the Hoffman bound for independent sets is obtained, and the
three-state case is considered in detail; in this case the natural
symmetrized subspace realizes the module $\Sym^m(\mathbb C^3)$.
\end{abstract}

\maketitle

\section{Introduction}

Algebraic methods play a central role in the spectral theory of graphs,
the theory of association schemes, and algebraic combinatorics
\cite{Delsarte1973,BannaiIto1984,BrouwerCohenNeumaier1989}. They are
especially natural for combinatorial objects that admit a tensor model and
compatible actions of a Lie algebra and a symmetric group.
Let
$$
E=E(K_N),
\qquad
m=|E|=\binom N2.
$$
A simple graph on a fixed set of $N$ vertices is specified by choosing a
subset $S\subseteq E$, that is, each edge position is in one of two
states: the edge is absent or present. Therefore the linear space whose
basis is indexed by all simple graphs is naturally identified with the
tensor power
$
(\mathbb C^2)^{\otimes m}.
$
The operators of adding and deleting one edge, together with a suitably
chosen Cartan operator, form an $\mathfrak{sl}_2$-triple. This point of
view was used in
\cite{BedratyukGraphAlgebraI} to describe the representation-theoretic
structure of the graph algebra.

In this paper we consider a multistate generalization of this construction.
Each edge position may have one of $n$ states
$$
0,1,\dots,n-1,
$$
where the state $0$ means that the edge is absent, and the states
$1,\dots,n-1$ correspond to colors. The local state space is
$\mathbb C^n$, and the space of all $(n-1)$-colored graphs has the tensor
model
$$
\mathcal C_N^{(n)}
\cong
(\mathbb C^n)^{\otimes m}.
$$
The standard diagonal action of $\mathfrak{gl}_n$ on this tensor power has
a direct graph-theoretic interpretation: the matrix unit $E_{ab}$ changes
the state of one edge position from $b$ to $a$. After restriction to the
subalgebra of traceless matrices, we obtain an action of $\mathfrak{sl}_n$.

Let
$$
\alpha=(\alpha_0,\dots,\alpha_{n-1}),
\qquad
\alpha_0+\cdots+\alpha_{n-1}=m,
$$
where $\alpha_a$ is the number of edge positions in state $a$. Denote by
$\mathcal C_\alpha$ the subspace of $\mathcal C_N^{(n)}$ spanned by all
colored graphs of this profile. Equivalently, the standard basis of
$\mathcal C_\alpha$ consists of all words of length $m$ over the alphabet
$\{0,\dots,n-1\}$ in which the symbol $a$ occurs exactly $\alpha_a$ times.

The set of such words is known in the modern literature as the
\emph{multislice}; on it one naturally considers the action of coordinate
transpositions \cite{Chase1973,Ruskey1988,FilmusODonnellWu2022}. In this
paper, the simple graph on the standard basis of the layer
$\mathcal C_\alpha$ is denoted by $J(m;\alpha)$ and is called the
\emph{multicolored Johnson graph}. Its vertices are the words of profile
$\alpha$, and two vertices are adjacent if and only if one is obtained
from the other by a single exchange of two positions containing different
symbols. For $n=2$ and
$
\alpha=(m-k,k)
$
this graph coincides with the classical Johnson graph $J(m,k)$.

The graph $J(m;\alpha)$ differs from the nonbinary Johnson scheme, in
which the total number of nonzero coordinates is fixed, but not the exact
multiplicity of each nonzero symbol
\cite{TarnanenAaltonenGoethals1985}. In our case the entire profile
$\alpha$ is fixed, and the symmetric group $S_m$ acts by permuting
positions. The vertex set of the graph is identified with the space of
left cosets
$$
S_m/Y_\alpha,
\qquad
Y_\alpha
=
S_{\alpha_0}\times\cdots\times S_{\alpha_{n-1}},
$$
where $Y_\alpha$ is the corresponding Young subgroup.

If the profile has two positive coordinates, the corresponding permutation
module is multiplicity-free and the classical Gelfand pair arises. For
profiles with three or more positive coordinates, multiplicity-freeness is
generally lost; the simplest example is the profile $(1,1,1)$. Thus, in
the multistate case, the algebra
$
\End_{S_m}(\mathcal C_\alpha)
$
may be noncommutative.

The main contribution of the paper is the realization of the adjacency
operator $A_\alpha$ of the graph $J(m;\alpha)$ in terms of representations
of Lie algebras. On the weight space $\mathcal C_\alpha$ we prove the
formula
$$
A_\alpha
=
\frac12
\left(
\sum_{a\ne b}\mathcal E_{ab}\mathcal E_{ba}
-
(n-1)mI
\right).
$$
Here $I$ is the identity operator, while $\mathcal E_{ab}$ changes the
state of one edge position from $b$ to $a$ and sums the results of all
possible such changes.
Thus the adjacency operator is expressed through local state-changing
operators generated by the action of $\mathfrak{sl}_n$. After the diagonal
terms are taken into account, this formula relates $A_\alpha$ to the image
of the quadratic Casimir operator of $\mathfrak{gl}_n$.

Using the Schur--Weyl decomposition, we obtain the eigenvalues of the
adjacency operator $A_\alpha$
$$
\theta_{\lambda,\alpha}
=
\frac12
\left(
c_\lambda
-
\sum_{a=0}^{n-1}\alpha_a^2
-
(n-1)m
\right),
$$
where $c_\lambda$ is the eigenvalue of the quadratic Casimir operator on
the irreducible $\mathfrak{gl}_n$-module of highest weight $\lambda$. The
contribution of the block
$$
(V_\lambda^{(n)})_\alpha\otimes S^\lambda
$$
to the multiplicity of the corresponding eigenvalue is
$
K_{\lambda,\alpha}\dim S^\lambda,
$
where $(V_\lambda^{(n)})_\alpha$ is the weight subspace of weight
$\alpha$, $S^\lambda$ is a Specht module, and
$K_{\lambda,\alpha}=\dim (V_\lambda^{(n)})_\alpha$ is the corresponding
Kostka number.
If different partitions $\lambda$ give the same eigenvalue, the full
multiplicity is obtained by summing these contributions.

We also prove that the operator $A_\alpha$ belongs to the center of the
algebra
$
\End_{S_m}(\mathcal C_\alpha),
$
even when this algebra is noncommutative. This result is consistent with
the group-theoretic model of the graph: the adjacency operator differs by
a scalar operator from the image of the sum of all transpositions in the
group algebra $\mathbb C[S_m]$. The role of the action of
$\mathfrak{gl}_n$ is to provide a local operator description of adjacency
and a uniform representation-theoretic derivation of the spectral formula
through the Casimir operator.

The paper is organized as follows. In Section~\ref{preliminaries} we
recall the necessary facts about $\mathfrak{gl}_n$, $\mathfrak{sl}_n$,
tensor powers, weights, and the action of $S_m$. In
Section~\ref{colored_graphs} we construct the space of colored graphs and
the state-changing operators. In Section~\ref{multislice_graph} we define
the graph $J(m;\alpha)$, describe its group-theoretic model, and discuss
its relation to Gelfand pairs. In Section~\ref{sec:spectrum} we obtain
the formula for the adjacency operator, its spectrum, and its centrality.
In Section~\ref{sec:applications} we consider valency, connectivity, the
Hoffman bound, and a numerical example. In Section~\ref{sl3} we study the
three-state case in detail.

\section{Preliminaries}
\label{preliminaries}

In this section we fix notation and recall standard facts from
representation theory; see, for example,
\cite{Humphreys,FultonHarris,FultonYoungTableaux}.

\subsection{The Lie algebras $\mathfrak{gl}_n$ and $\mathfrak{sl}_n$}

Let $\mathfrak{gl}_n=\mathfrak{gl}_n(\mathbb C)$ be the Lie algebra of all
complex $n\times n$ matrices with the commutator
$$
[X,Y]=XY-YX,
$$
and let
$$
\mathfrak{sl}_n
=
\{X\in\mathfrak{gl}_n:\operatorname{tr}X=0\}.
$$
Denote by $E_{ab}$, $0\le a,b\le n-1$, the standard matrix units. They
satisfy the relations
\begin{equation}\label{matrix_commutator}
[E_{ab},E_{cd}]
=
\delta_{bc}E_{ad}-\delta_{ad}E_{cb}.
\end{equation}
As a basis of the Cartan subalgebra in $\mathfrak{sl}_n$ we shall use the
matrices
$$
H_r=E_{r-1,r-1}-E_{rr},
\qquad
r=1,\dots,n-1.
$$

\subsection{Representations of Lie algebras}

Let $\mathfrak g$ be a Lie algebra, and let $V$ be a complex vector
space. A \emph{representation} of the Lie algebra $\mathfrak g$ in $V$ is
a linear map
$$
\rho:\mathfrak g\longrightarrow \End(V),
$$
which preserves the commutator, that is,
$$
\rho([X,Y])
=
[\rho(X),\rho(Y)]
=
\rho(X)\rho(Y)-\rho(Y)\rho(X)
$$
for all $X,Y\in\mathfrak g$. In this case the space $V$ is also called a
$\mathfrak g$-module.

The standard representation of the algebra $\mathfrak{gl}_n$ is its
natural action on the space
$
W=\mathbb C^n,
$
equipped with the standard basis $$ v_0,\dots,v_{n-1}, $$ where $v_i$ is
a coordinate unit vector.
The action is given by ordinary multiplication of a matrix by a vector:
$$
\rho(X)v=Xv,
\qquad
X\in\mathfrak{gl}_n,\quad v\in W.
$$

For the matrix units this action has the form
$$
E_{ab}v_c=\delta_{bc}v_a.
$$
Thus $E_{ab}$ sends the basis state $b$ to the state $a$ and annihilates
all other basis states.

After restriction to the subalgebra $\mathfrak{sl}_n$, we obtain the
standard representation of $\mathfrak{sl}_n$ on the same space $W$.

In what follows, by
$$
\rho_m:\mathfrak{gl}_n\longrightarrow
\End(W^{\otimes m})
$$
we shall denote the representation on the $m$-th tensor power induced by
the standard representation. Its explicit formula will be given in the
next subsection.

\subsection{The diagonal action on a tensor power}

On the space $W^{\otimes m}$ the algebra $\mathfrak{gl}_n$ acts
diagonally:
$$
\rho_m(X)
=
\sum_{s=1}^m X^{(s)},
$$
where $X^{(s)}$ acts as $X$ on the $s$-th tensor factor and as the
identity on the other factors. For the matrix units we put
$$
\mathcal E_{ab}
=
\rho_m(E_{ab})
=
\sum_{s=1}^m E_{ab}^{(s)}.
$$
It follows from \eqref{matrix_commutator} that
\begin{equation}\label{tensor_commutator}
[\mathcal E_{ab},\mathcal E_{cd}]
=
\delta_{bc}\mathcal E_{ad}-\delta_{ad}\mathcal E_{cb}.
\end{equation}

It is convenient to write the standard tensor basis as words
$$
w=c_1c_2\cdots c_m,
\qquad
c_s\in\{0,\dots,n-1\}.
$$
On such a word,
\begin{equation*}
\mathcal E_{ab}(w)
=
\sum_{\substack{1\le s\le m\\c_s=b}}
 c_1\cdots c_{s-1}a c_{s+1}\cdots c_m.
\end{equation*}

If the symbol $a$ occurs in the word $w$ exactly $\mu_a$ times, then
$$
\mathcal E_{aa}w=\mu_aw.
$$
The tuple
$$
\mu=(\mu_0,\dots,\mu_{n-1}),
\qquad
\sum_a\mu_a=m,
$$
is the weight of the word with respect to the diagonal subalgebra of
$\mathfrak{gl}_n$. For $\mathfrak{sl}_n$ the corresponding eigenvalues are
$$
H_rw=(\mu_{r-1}-\mu_r)w,
\qquad
r=1,\dots,n-1.
$$
Since the sum of the coordinates is equal to $m$, these differences
uniquely determine the whole profile $\mu$.

\subsection{The compatible action of the symmetric group}

The symmetric group $S_m$ acts on $W^{\otimes m}$ by permuting the tensor
factors. Denote the corresponding representation by
$$
\pi_m:S_m\longrightarrow\operatorname{GL}(W^{\otimes m}).
$$
For $\sigma\in S_m$ we put
$$
\pi_m(\sigma)
\bigl(
v_{c_1}\otimes\cdots\otimes v_{c_m}
\bigr)
=
v_{c_{\sigma^{-1}(1)}}\otimes\cdots\otimes
v_{c_{\sigma^{-1}(m)}}.
$$
By linear extension we also obtain a representation of the group algebra
$$
\pi_m:\mathbb C[S_m]\longrightarrow\End(W^{\otimes m}).
$$

\begin{lemma}
For all $X\in\mathfrak{gl}_n$ and $\sigma\in S_m$ we have
$$
\pi_m(\sigma)\rho_m(X)
=
\rho_m(X)\pi_m(\sigma).
$$
\end{lemma}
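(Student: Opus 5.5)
By linearity in $X$, it suffices to verify the identity on simple tensors and for a single generator, or more conveniently for an arbitrary $X$ directly. We also use that $\pi_m$ is a homomorphism, so we only need to check one $\sigma$ at a time; we could even restrict to adjacent transpositions, but a direct computation for general $\sigma$ is just as short.

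The key observation is the conjugation formula
$$
\pi_m(\sigma)\,X^{(s)}\,\pi_m(\sigma)^{-1}=X^{(\sigma(s))}.
$$
We check it on a simple tensor $u_1\otimes\cdots\otimes u_m$, where the $u_i$ are arbitrary vectors of $W$; the defining formula for $\pi_m$ extends multilinearly to such tensors. The operator $\pi_m(\sigma)$ places the factor $u_j$ in position $\sigma(j)$: position $i$ receives $u_{\sigma^{-1}(i)}$.

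1. Applying $\pi_m(\sigma)^{-1}=\pi_m(\sigma^{-1})$ moves $u_{\sigma(s)}$ into slot $s$.
2. Applying $X^{(s)}$ replaces it by $Xu_{\sigma(s)}$.
3. Applying $\pi_m(\sigma)$ returns everything to its original place.

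The net effect is to apply $X$ to the factor in slot $\sigma(s)$, which is exactly $X^{(\sigma(s))}$.

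Summing over $s$ and using that $s\mapsto\sigma(s)$ is a bijection of $\{1,\dots,m\}$ gives
$$
\pi_m(\sigma)\rho_m(X)\pi_m(\sigma)^{-1}=\sum_{s}X^{(\sigma(s))}=\rho_m(X),
$$
which is the claim.

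The only delicate point is the index bookkeeping: we must confirm that the convention $c_{\sigma^{-1}(i)}$ in the definition of $\pi_m$ indeed sends factor $j$ to slot $\sigma(j)$. Since the final sum is symmetric over all positions, a mistake in the direction of $\sigma$ would not affect the conclusion. As an alternative check, one can verify the identity on basis words for the matrix units $\mathcal E_{ab}$. Applying $\mathcal E_{ab}$ and then permuting gives the sum over the occurrences of $b$ in the permuted word, each replaced by $a$. This is the same set of words as first replacing each occurrence of $b$ by $a$ and then permuting. The identity then extends to all of $\mathfrak{gl}_n$ by linearity.
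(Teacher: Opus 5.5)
Your proof is correct and is the paper's argument: establish $\pi_m(\sigma)X^{(s)}\pi_m(\sigma)^{-1}=X^{(\sigma(s))}$, then reindex the sum over $s$ using that $\sigma$ is a bijection. Your slot-by-slot check on simple tensors makes explicit the bookkeeping that the paper states without proof, and the direction you derive (factor $j$ goes to slot $\sigma(j)$) matches the paper's convention.
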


\begin{proof}
For $X^{(s)}$, which acts on the $s$-th factor, we have
$$
\pi_m(\sigma)X^{(s)}\pi_m(\sigma)^{-1}
=
X^{(\sigma(s))}.
$$
Therefore
$$
\sigma\rho_m(X)\sigma^{-1}
=
\sum_{s=1}^m X^{(\sigma(s))}
=
\sum_{t=1}^m X^{(t)}
=
\rho_m(X).
$$
\end{proof}

Thus the image of the universal enveloping algebra
$U(\mathfrak{gl}_n)$ commutes with the action of $S_m$. This compatibility
is the basis of Schur--Weyl duality.

%%%%%%%%%%%%%%%%%%%%%%%%%
\section{The space of colored graphs and the action of $\mathfrak{sl}_n$}
\label{colored_graphs}

In this section we introduce the space of $(n-1)$-colored graphs on a
fixed vertex set and establish its tensor model. Each edge position is
regarded as an independent position with $n$ possible states; therefore
the space of all colored graphs is naturally identified with the tensor
power $(W)^{\otimes m}$. This identification allows us to transfer to
colored graphs the standard diagonal action of the algebra
$\mathfrak{gl}_n$, and, after restriction, the action of
$\mathfrak{sl}_n$. The corresponding matrix operators acquire a direct
combinatorial meaning as operators changing the state of one edge
position.

\subsection{Colored graphs as a tensor space}

Let
$$
V=\{1,\dots,N\},
\qquad
E=E(K_N),
\qquad
m=|E|=\binom N2.
$$
The elements of $E$ will be called \emph{edge positions}. Each such
position may be in one of the states
$$
0,1,\dots,n-1,
$$
where $0$ means the absence of an edge, and $1,\dots,n-1$ is its color.

A basis $(n-1)$-colored graph is specified by a partition
$$
E=S_0\sqcup S_1\sqcup\cdots\sqcup S_{n-1},
$$
where $S_a$ is the set of edge positions in state $a$. The corresponding
basis element will be denoted by
$$
x_{S_0,\dots,S_{n-1}}.
$$
Define
$$
\mathcal C_N^{(n)}
=
\Span_{\mathbb C}
\left\{
 x_{S_0,\dots,S_{n-1}}:
 E=S_0\sqcup\cdots\sqcup S_{n-1}
\right\}.
$$

After enumerating $E=\{e_1,\dots,e_m\}$, a graph corresponds to a word
$c_1\cdots c_m$, where $c_s=a$ if and only if $e_s\in S_a$. Hence we have
a natural identification
\begin{equation}\label{graph_tensor_identification}
\mathcal C_N^{(n)}\cong(W)^{\otimes m}.
\end{equation}
The edge positions here are labeled; we work with graphs on a fixed vertex
set, not with isomorphism classes.

\subsection{State-changing operators}

Via the identification \eqref{graph_tensor_identification}, the operators
$\mathcal E_{ab}$ act on colored graphs. For $a\ne b$ we have
\begin{gather*}
\mathcal E_{ab}
\bigl(x_{S_0,\dots,S_a,\dots,S_b,\dots,S_{n-1}}\bigr)
=
\sum_{e\in S_b}
 x_{S_0,\dots,S_a\cup\{e\},\dots,S_b\setminus\{e\},\dots,S_{n-1}}.
\end{gather*}
Thus $\mathcal E_{ab}$ changes the state of one edge from $b$ to $a$ and
sums all possible such changes. The diagonal operators act by the formula
$$
\mathcal E_{aa}x_{S_0,\dots,S_{n-1}}
=
|S_a|x_{S_0,\dots,S_{n-1}}.
$$
The relations \eqref{tensor_commutator} show that these operators define
an action of $\mathfrak{gl}_n$, while their traceless part defines an
action of $\mathfrak{sl}_n$.

\subsection{The two-state case}

For $n=2$, colored graphs are ordinary simple graphs. If $S\subseteq E$
is the set of present edges, then the corresponding basis element will be
denoted by $x_S$. Put
$$
D_+=\mathcal E_{10},
\qquad
D_-=\mathcal E_{01},
\qquad
H=\mathcal E_{11}-\mathcal E_{00}.
$$
Then
$$
D_+(x_S)=\sum_{e\notin S}x_{S\cup\{e\}},
\qquad
D_-(x_S)=\sum_{e\in S}x_{S\setminus\{e\}},
$$
and
$$
H(x_S)=(2|S|-m)x_S.
$$
A direct verification shows that the relations
$$
[D_+,D_-]=H,
\qquad
[H,D_+]=2D_+,
\qquad
[H,D_-]=-2D_-
$$
hold. Thus these operators realize the Lie algebra $\mathfrak{sl}_2$.

\subsection{Profiles and weight spaces}

Let
$$
\alpha=(\alpha_0,\dots,\alpha_{n-1}),
\qquad
\alpha_a\in\mathbb Z_{\ge0},
\qquad
\sum_a\alpha_a=m.
$$
We call this tuple a \emph{color profile}. Denote by $\mathcal C_\alpha$
the subspace spanned by all basis graphs for which $|S_a|=\alpha_a$:
$$
\mathcal C_\alpha
=
\Span\left\{
 x_{S_0,\dots,S_{n-1}}:
 |S_a|=\alpha_a\text{ for all }a
\right\}.
$$
On this space,
$$
\mathcal E_{aa}=\alpha_aI,
\qquad
H_r=(\alpha_{r-1}-\alpha_r)I.
$$
Hence $\mathcal C_\alpha$ is a weight space and
$$
\mathcal C_N^{(n)}
=
\bigoplus_{\alpha_0+\cdots+\alpha_{n-1}=m}
\mathcal C_\alpha.
$$
For $a\ne b$, the operator $\mathcal E_{ab}$ maps between neighboring
weight spaces:
$$
\mathcal E_{ab}:\mathcal C_\alpha
\longrightarrow
\mathcal C_{\alpha+\varepsilon_a-\varepsilon_b},
$$
and its restriction is zero if $\alpha_b=0$.

\section{The multicolored Johnson graph}
\label{multislice_graph}

In this section, on each weight layer of the space of colored graphs, we
introduce a natural transition graph. Its vertices are colored graphs of
fixed profile, or, equivalently, words with prescribed multiplicities of
symbols, and adjacency is defined by a single exchange of two positions
with different states. We establish the relation of this graph with the
transposition graph on the \emph{multislice}, describe its group-theoretic
model through the action of the symmetric group $S_m$ on the coset space
$S_m/Y_\alpha$, and show that in the two-state case this construction
reduces to the classical Johnson graph.

\subsection{Definition}

The classical Johnson graph $J(m,k)$ has as vertices all $k$-element
subsets of the set $\{1,\dots,m\}$; two vertices are adjacent if one is
obtained from the other by replacing one element. In the language of
characteristic words, this means exchanging one symbol $1$ and one symbol
$0$.

By the \textit{multicolored Johnson graph} $J(m;\alpha)$ we shall mean
the simple graph whose vertices are all words of length $m$ over the
alphabet $\{0,\dots,n-1\}$ with profile $\alpha$. Two vertices are
adjacent if and only if one is obtained from the other by a single
exchange of two positions containing different symbols.

Equivalently, the vertices are the basis colored graphs of the space
$\mathcal C_\alpha$, and adjacency means exchanging the states of two
edge positions. For $n=2$ and $\alpha=(m-k,k)$ we obtain $J(m,k)$.

\subsection{Group-theoretic model}

Recall that the symmetric group $S_m$ acts on the tensor space
$W^{\otimes m}$ by permuting the tensor factors. The corresponding
representation was denoted by
$$
\pi_m:S_m\longrightarrow\operatorname{GL}(W^{\otimes m}).
$$
In the word model this action has the form
$$
\pi_m(\sigma)(c_1c_2\cdots c_m)
=
c_{\sigma^{-1}(1)}
c_{\sigma^{-1}(2)}
\cdots
c_{\sigma^{-1}(m)},
\qquad
\sigma\in S_m.
$$
A permutation of positions does not change the number of occurrences of
each symbol; hence each space $\mathcal C_\alpha$ is invariant under the
action of $S_m$. Denote by
$$
\pi_\alpha
=
\pi_m\big|_{\mathcal C_\alpha}
$$
the corresponding permutation representation on $\mathcal C_\alpha$. In
what follows, the notation $\pi_m$ and $\pi_\alpha$ will also be used for
their linear extensions to the group algebra $\mathbb C[S_m]$.

The action of $S_m$ on the set of words of profile $\alpha$ is
transitive. Indeed, any two words with the same multiplicities of symbols
are obtained from one another by a permutation of positions. Fix the word
$$
w_\alpha
=
\underbrace{0\cdots0}_{\alpha_0}
\underbrace{1\cdots1}_{\alpha_1}
\cdots
\underbrace{(n-1)\cdots(n-1)}_{\alpha_{n-1}}.
$$
Its stabilizer consists of permutations that separately permute positions
occupied by equal symbols. Hence
$$
\operatorname{Stab}_{S_m}(w_\alpha)
=
Y_\alpha
=
S_{\alpha_0}\times\cdots\times S_{\alpha_{n-1}}.
$$
Therefore the map
$$
S_m/Y_\alpha\longrightarrow V(J(m;\alpha)),
\qquad
\sigma Y_\alpha\longmapsto \pi_m(\sigma)w_\alpha,
$$
is a well-defined bijection. After linear extension it gives an
isomorphism of $S_m$-modules
\begin{equation}\label{permutation_module}
\mathcal C_\alpha
\cong
\mathbb C[S_m/Y_\alpha].
\end{equation}

Let $\mathcal T_m$ be the set of all transpositions in $S_m$. The Schreier
graph of the transitive action of $S_m$ on $S_m/Y_\alpha$ with respect to
the set $\mathcal T_m$ has as vertices the left cosets $S_m/Y_\alpha$,
and a transposition $\tau\in\mathcal T_m$ connects the vertex
$\sigma Y_\alpha$ with the vertex $\tau\sigma Y_\alpha$
\cite{Sabatini2022}. Under the bijection \eqref{permutation_module}, the
action of the transposition $\tau=(rs)$ corresponds to exchanging the
symbols in positions $r$ and $s$ of a word of profile $\alpha$.

If equal symbols stand in these positions, the transposition fixes the
word, and therefore a loop arises in the Schreier graph. If the symbols
are different, the resulting word is adjacent to the initial one in the
graph $J(m;\alpha)$. Conversely, each adjacent vertex of $J(m;\alpha)$ is
obtained by the transposition of exactly those two positions in which the
words differ. Hence $J(m;\alpha)$ is the simple undirected graph underlying
the Schreier graph
$$
\operatorname{Sch}
\bigl(S_m\curvearrowright S_m/Y_\alpha,\mathcal T_m\bigr),
$$
after deleting loops and forgetting edge labels. Transposition graphs on
sets of words of fixed profile are considered, in particular, in
\cite{Chase1973,Ruskey1988,FilmusODonnellWu2022}.

If the profile $\alpha$ has at least three positive coordinates, then
$J(m;\alpha)$ is, in general, not a single orbital graph of the action of
$S_m$. The stabilizer $Y_\alpha$ cannot send the exchange of states $a$
and $b$ to the exchange of another pair of states $c$ and $d$, since it
only permutes positions within the classes of equal symbols. Therefore
each unordered pair of states
$$
\{a,b\},
\qquad
\alpha_a\alpha_b>0,
$$
defines a separate orbital graph, and $J(m;\alpha)$ is the union of all
such orbital graphs. In the two-state case there is only one pair of
distinct states, and therefore the classical Johnson graph is a single
orbital graph.

Now consider the element of the group algebra
$$
T_m
=
\sum_{1\le r<s\le m}(rs)
\in\mathbb C[S_m],
$$
that is, the sum of all transpositions. Since the transpositions form one
conjugacy class in $S_m$, we have
$$
T_m\in Z(\mathbb C[S_m]).
$$

The action of this element on the space $\mathcal C_\alpha$ is related to
the adjacency operator by the relation
\begin{equation}\label{class_sum_adjacency}
\pi_\alpha(T_m)
=
A_\alpha
+
\left(
\sum_{a=0}^{n-1}\binom{\alpha_a}{2}
\right)I.
\end{equation}
Indeed, for each state $a$ there are
$$
\binom{\alpha_a}{2}
$$
transpositions of two positions occupied by the symbol $a$. Each such
transposition fixes the initial word and gives a diagonal contribution. By
contrast, a transposition of two positions with different symbols gives
one adjacent vertex of the graph $J(m;\alpha)$, and each adjacent vertex
arises from a unique transposition. Summing over all transpositions gives
\eqref{class_sum_adjacency}.

\subsection{Gelfand pairs and multiplicities}

For $n=2$ and $\alpha=(m-k,k)$ we have
$$
Y_\alpha=S_{m-k}\times S_k
$$
and the classical multiplicity-free decomposition
\begin{equation*}
\mathbb C[S_m/(S_{m-k}\times S_k)]
\cong
\bigoplus_{j=0}^{\min(k,m-k)}S^{(m-j,j)}.
\end{equation*}
Here $S^{(m-j,j)}$ denotes the irreducible Specht module of the symmetric
group $S_m$ corresponding to the partition $(m-j,j)$ of the number $m$.
Thus $(S_m,S_{m-k}\times S_k)$ is a Gelfand pair
\cite{CeccheriniSilbersteinScarabottiTolli}. Multiplicity-freeness
explains why every $S_m$-equivariant operator acts by a scalar on each
irreducible component.

For profiles with three or more nonzero components, this property is not
guaranteed. For example, for $m=3$ and $\alpha=(1,1,1)$ the subgroup
$Y_\alpha$ is trivial, and
$
\mathcal C_\alpha\cong\mathbb C[S_3]
$
is the regular representation. The module $S^{(2,1)}$ has dimension $2$
and occurs in it with multiplicity $2$. Therefore the algebra
$\End_{S_m}(\mathcal C_\alpha)$ may be noncommutative. The operator
formula below shows that the particular operator $A_\alpha$ nevertheless
acts by a scalar on each isotypic block.

%%%%%%%%%%%%%%%%%%%%%%%%%%%%%%%%%%%%%%%%%%%%%
\section{The adjacency operator and its spectrum}
\label{sec:spectrum}

In this section the adjacency operator of the multicolored Johnson graph
is expressed in terms of the operators of the $\mathfrak{gl}_n$-action;
then, by means of the quadratic Casimir operator and the Schur--Weyl
decomposition, its eigenvalues and their multiplicities are determined.

\subsection{Operator formula}
We now write the adjacency operator of the graph $J(m;\alpha)$ in terms
of state-changing operators.
The standard basis elements of the space $\mathcal C_\alpha$ are
identified with the vertices of the graph $J(m;\alpha)$, that is, with
words of profile $\alpha$. The adjacency operator is the linear operator
$$
A_\alpha:\mathcal C_\alpha\longrightarrow\mathcal C_\alpha,
$$
which on each standard basis element $x$ is given by the formula
$$
A_\alpha(x)
=
\sum_{ y\sim x}y.
$$
Here $y\sim x$ means that the vertices $x$ and $y$ are adjacent.
In this basis its matrix is the ordinary adjacency matrix of the graph
$J(m;\alpha)$.

The following formula is central for the subsequent spectral analysis: it
translates combinatorial adjacency into the language of operators generated
by the action of $\mathfrak{gl}_n$ or $\mathfrak{sl}_n$.

\begin{theorem}
On the space $\mathcal C_\alpha$, the adjacency operator $A_\alpha$ of the
graph $J(m;\alpha)$ is given by the formula
\begin{equation}\label{adjacency_formula}
A_\alpha
=
\frac12
\left(
\sum_{a\ne b}\mathcal E_{ab}\mathcal E_{ba}
-(n-1)mI
\right)\Bigg|_{\mathcal C_\alpha}.
\end{equation}
\end{theorem}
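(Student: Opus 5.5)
I will prove the formula by expanding each product $\mathcal E_{ab}\mathcal E_{ba}$ into local operators and reading off its action on a basis word $w=c_1\cdots c_m$ of profile $\alpha$. Since $\mathcal E_{ab}=\sum_s E_{ab}^{(s)}$, we have
$$
\mathcal E_{ab}\mathcal E_{ba}
=
\sum_{s}E_{ab}^{(s)}E_{ba}^{(s)}
+
\sum_{s\ne t}E_{ab}^{(s)}E_{ba}^{(t)}.
$$
The two sums are the diagonal part and the off-diagonal part. I treat them separately and then sum over ordered pairs $a\ne b$.

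\textbf{Diagonal part.} For the diagonal part, $E_{ab}E_{ba}=E_{aa}$ on a single factor. Hence $\sum_s E_{ab}^{(s)}E_{ba}^{(s)}=\mathcal E_{aa}$, which acts as $\alpha_a I$ on $\mathcal C_\alpha$. Summing over $b\ne a$ and then over $a$ gives
$$
\sum_{a}(n-1)\alpha_a I=(n-1)mI .
$$
This is exactly the term subtracted in \eqref{adjacency_formula}.

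\textbf{Off-diagonal part.} Now apply $E_{ab}^{(s)}E_{ba}^{(t)}$ with $s\ne t$ to $w$. The result is nonzero only when $c_t=a$ and $c_s=b$. In that case it replaces $c_t$ by $b$ and $c_s$ by $a$, which is precisely the word obtained from $w$ by exchanging the symbols in positions $s$ and $t$. So
$$
\sum_{a\ne b}\sum_{s\ne t}E_{ab}^{(s)}E_{ba}^{(t)}\,w
=
\sum_{\substack{(s,t),\ s\ne t\\ c_s\ne c_t}} (st)\cdot w ,
$$
because for an ordered pair $(s,t)$ with $c_s\ne c_t$ there is exactly one ordered pair $(a,b)=(c_t,c_s)$ of distinct states contributing. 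Each unordered pair $\{s,t\}$ with $c_s\ne c_t$ therefore appears twice. By the discussion preceding \eqref{class_sum_adjacency}, such exchanges correspond bijectively to the neighbours of $w$ in $J(m;\alpha)$. Hence the off-diagonal part equals $2A_\alpha w$.

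\textbf{Conclusion.} Adding the two parts gives
$$
\sum_{a\ne b}\mathcal E_{ab}\mathcal E_{ba}\big|_{\mathcal C_\alpha}=2A_\alpha+(n-1)mI,
$$
which is the claimed formula.

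\textbf{Main obstacle.} The only delicate point is the bookkeeping in the off-diagonal part. One must check that the ordered sum over $(a,b)$ and $(s,t)$ counts each adjacent word exactly twice. One must also check that pairs of positions with equal symbols contribute nothing: this holds because $a\ne b$ forces $c_s\ne c_t$. I would verify this by fixing the word $w$ and the unordered pair $\{s,t\}$, and listing the (at most two) nonzero terms explicitly.
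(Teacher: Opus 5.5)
Your proof is correct and follows essentially the same route as the paper. The paper splits $\mathcal E_{ab}\mathcal E_{ba}w$ into two cases: the second operator acts on the same position, returning $w$ and giving $\alpha_a w$, or it acts on a different position, producing an exchange; this is exactly your $s=t$ versus $s\ne t$ decomposition. The paper also gets the factor $2$ from each neighbour arising once from $(a,b)$ and once from $(b,a)$; your use of ordered position pairs $(s,t)$ and the identity $E_{ab}E_{ba}=E_{aa}$ is a slightly more formal rendering of the same count.
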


\begin{proof}
Let
$$
w=x_{S_0,\dots,S_{n-1}}\in\mathcal C_\alpha.
$$
Consider the action of one summand
$$
\mathcal E_{ab}\mathcal E_{ba},
\qquad a\ne b.
$$
First, the operator $\mathcal E_{ba}$ changes one edge position from
state $a$ to state $b$. There are exactly $\alpha_a$ such positions in the
word $w$. After this, the operator $\mathcal E_{ab}$ changes one edge
position from state $b$ back to state $a$. Two types of contributions
arise here. If the second operator acts on the same position that was
changed by the first operator, then we return to the initial word $w$. For
a fixed ordered pair $(a,b)$ there are exactly $\alpha_a$ such
contributions, that is, they give the diagonal summand
$
\alpha_a w.
$

If the second operator acts on another position which initially was in
state $b$, then the result is a mutual interchange of the states of two
positions: one from $S_a$ and one from $S_b$. These interchanges are
precisely the ones that define the edges of the graph $J(m;\alpha)$.
Hence, for fixed $a\ne b$, we have
$$
\mathcal E_{ab}\mathcal E_{ba}(w)
=
\alpha_a\,w
+
\sum_{e\in S_a,\ f\in S_b}
w^{(e\leftrightarrow f)},
$$
where $w^{(e\leftrightarrow f)}$ denotes the word obtained from $w$ by
interchanging the states in the positions $e$ and $f$.

Now we sum over all ordered pairs $a\ne b$. The diagonal contribution is
$$
\sum_{a\ne b}\alpha_a\,w
=
(n-1)\sum_a\alpha_a\,w
=
(n-1)m\,w.
$$
Each adjacent vertex of the graph $J(m;\alpha)$ is counted twice: once for
the ordered pair $(a,b)$ and once for the ordered pair $(b,a)$. Therefore
$$
\left(
\sum_{a\ne b}\mathcal E_{ab}\mathcal E_{ba}
\right)(w)
=
(n-1)m\,w+2A_\alpha(w).
$$
The formula of the theorem follows immediately.
\end{proof}

For $n=2$ and $\alpha=(m-k,k)$ this formula takes the form
$$
A_{(m-k,k)}
=
\frac12
\left(
\mathcal E_{10}\mathcal E_{01}
+
\mathcal E_{01}\mathcal E_{10}
-mI
\right),
$$
which coincides with the operator description of the Johnson graph through
raising and lowering operators on the Boolean lattice
\cite{Stanley1988,Feinsilver2012}.

On the whole tensor space
$W^{\otimes m}$ the identity
\begin{equation*}
\Omega
=
nmI+2\pi_m(T_m),
\end{equation*}
holds, where
$$
\Omega
=
\sum_{a,b=0}^{n-1}
\mathcal E_{ab}\mathcal E_{ba}
$$
is the image of the quadratic Casimir element in the representation
$\rho_m$ of the algebra $\mathfrak{gl}_n$.

Indeed, the summands in which both operators act on the same tensor factor
give $nmI$, while the summands acting on two different tensor factors give
the operator permuting these factors. Together with
\eqref{class_sum_adjacency}, this gives an independent verification of
the formula \eqref{adjacency_formula}.

\subsection{The quadratic Casimir}

Consider the quadratic element
$$
\Omega_{\mathfrak{gl}_n}
=
\sum_{a,b=0}^{n-1}E_{ab}E_{ba}
\in U(\mathfrak{gl}_n).
$$
With respect to the standard nondegenerate invariant bilinear form
$$
B(X,Y)=\operatorname{tr}(XY)
$$
the bases $\{E_{ab}\}$ and $\{E_{ba}\}$ are mutually dual. Therefore
$\Omega_{\mathfrak{gl}_n}$ is the quadratic Casimir element and, by the
general theory, belongs to the center of the universal enveloping algebra:
$$
\Omega_{\mathfrak{gl}_n}
\in Z\bigl(U(\mathfrak{gl}_n)\bigr),
$$
\cite{Humphreys,FultonHarris}.
In the representation on
$(W)^{\otimes m}$ its image is
$$
\Omega
=
\sum_{a,b=0}^{n-1}\mathcal E_{ab}\mathcal E_{ba}.
$$

\begin{lemma}\label{lem:casimir_eigenvalue}
Let $V_\lambda^{(n)}$ be an irreducible polynomial
$\mathfrak{gl}_n$-module with highest weight
$$
\lambda=(\lambda_0,\dots,\lambda_{n-1}),
\qquad
\lambda_0\ge\cdots\ge\lambda_{n-1}\ge0.
$$
Then $\Omega$ acts on it by the scalar
\begin{equation}\label{eq:casimir_eigenvalue}
c_\lambda
=
\sum_{a=0}^{n-1}
\lambda_a(\lambda_a+n-1-2a).
\end{equation}
\end{lemma}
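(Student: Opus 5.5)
Since $\Omega_{\mathfrak{gl}_n}$ is central in $U(\mathfrak{gl}_n)$, by Schur's lemma it acts by a scalar on the irreducible module $V_\lambda^{(n)}$. It therefore suffices to compute its action on a single nonzero vector, and we take a highest weight vector $v$ of weight $\lambda$. Such a vector satisfies $E_{aa}v=\lambda_a v$ for all $a$, and $E_{ab}v=0$ whenever $a<b$. The raising operators are the $E_{ab}$ with $a<b$, in agreement with the ordering $\lambda_0\ge\cdots\ge\lambda_{n-1}$ and the Cartan elements $H_r=E_{r-1,r-1}-E_{rr}$ used earlier.

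We split $\Omega_{\mathfrak{gl}_n}=\sum_{a,b}E_{ab}E_{ba}$ into three kinds of terms.

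\begin{itemize}
\item The diagonal terms $a=b$ contribute $\sum_a E_{aa}^2 v=\sum_a\lambda_a^2 v$.
\item The terms with $a<b$ give $E_{ab}E_{ba}v$. Here $E_{ab}$ is raising but stands to the left, so we reorder it using the relation \eqref{matrix_commutator}: $E_{ab}E_{ba}=E_{ba}E_{ab}+E_{aa}-E_{bb}$. Since $E_{ab}v=0$, this gives $E_{ab}E_{ba}v=(\lambda_a-\lambda_b)v$.
\item The terms with $a>b$ have the raising operator $E_{ba}$ acting first on $v$, so $E_{ab}E_{ba}v=0$.
\end{itemize}

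Hence
\[
c_\lambda=\sum_a\lambda_a^2+\sum_{a<b}(\lambda_a-\lambda_b).
\]

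It remains to evaluate the linear sum. The index $a$ appears with a plus sign for each $b>a$, which happens $n-1-a$ times. It appears with a minus sign for each $b<a$, which happens $a$ times. So $\sum_{a<b}(\lambda_a-\lambda_b)=\sum_a(n-1-2a)\lambda_a$, and therefore $c_\lambda=\sum_a\lambda_a(\lambda_a+n-1-2a)$, which is \eqref{eq:casimir_eigenvalue}.

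None of these steps is a real obstacle. The one point that needs care is the sign convention: we must confirm that $E_{ab}$ with $a<b$ is indeed the raising operator, so that the surviving correction terms produce $+(n-1-2a)$ rather than its negative. A quick check with $n=2$ and $\lambda=(1,0)$ on $W$ gives $\Omega v_0=(E_{00}^2+E_{01}E_{10})v_0=2v_0$, and the formula gives $1\cdot(1+1)=2$, as expected.
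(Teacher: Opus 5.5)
Your proof is correct and follows the paper's argument essentially step for step. Both reduce to a highest weight vector by centrality, note that the terms where a raising operator acts first vanish, rewrite $E_{ab}E_{ba}v=(\lambda_a-\lambda_b)v$ for $a<b$ via the commutator, and count the coefficient $n-1-2a$ in $\sum_{a<b}(\lambda_a-\lambda_b)$.
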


\begin{proof}
Since $\Omega$ is central, it is enough to compute its action on a highest
weight vector $v_\lambda$. We have
$$
\Omega
=
\sum_aE_{aa}^2
+
\sum_{a<b}(E_{ab}E_{ba}+E_{ba}E_{ab}).
$$
For the standard choice of a Borel subalgebra,
$E_{ab}v_\lambda=0$ for $a<b$, and
$$
E_{ab}E_{ba}v_\lambda
=
[E_{ab},E_{ba}]v_\lambda
=
(E_{aa}-E_{bb})v_\lambda
=
(\lambda_a-\lambda_b)v_\lambda.
$$
Therefore the eigenvalue is
$$
\sum_a\lambda_a^2+
\sum_{a<b}(\lambda_a-\lambda_b)
=
\sum_{a=0}^{n-1}
\lambda_a(\lambda_a+n-1-2a).
$$

Indeed, in the sum
$$
\sum_{a<b}(\lambda_a-\lambda_b)
$$
each $\lambda_a$ occurs with plus sign $n-1-a$ times and with minus sign
$a$ times. Therefore
$$
\sum_{a<b}(\lambda_a-\lambda_b)
=
\sum_{a=0}^{n-1}(n-1-2a)\lambda_a.
$$
Hence
$$
\sum_{a=0}^{n-1}\lambda_a^2+
\sum_{a<b}(\lambda_a-\lambda_b)
=
\sum_{a=0}^{n-1}
\left(
\lambda_a^2+(n-1-2a)\lambda_a
\right)
=
\sum_{a=0}^{n-1}
\lambda_a(\lambda_a+n-1-2a).
$$

\end{proof}

\subsection{Schur--Weyl decomposition and the spectrum}

The classical Schur--Weyl decomposition has the form
\begin{equation}
(W)^{\otimes m}
\cong
\bigoplus_{\substack{\lambda\vdash m\\\ell(\lambda)\le n}}
V_\lambda^{(n)}\otimes S^\lambda,
\end{equation}
where $\ell(\lambda)$ is the number of nonzero parts of the partition
$\lambda$, $V_\lambda^{(n)}$ is an irreducible $\mathfrak{gl}_n$-module,
and $S^\lambda$ is a Specht module of the group $S_m$.

Taking the weight component of profile $\alpha$, we obtain
\begin{equation}\label{weight_schur_weyl}
\mathcal C_\alpha
\cong
\bigoplus_{\substack{\lambda\vdash m\\\ell(\lambda)\le n}}
(V_\lambda^{(n)})_\alpha\otimes S^\lambda.
\end{equation}
The dimension of the weight space is equal to the Kostka number:
$$
\dim(V_\lambda^{(n)})_\alpha=K_{\lambda,\alpha}.
$$

\begin{theorem}\label{spectrum}
On the subspace
$(V_\lambda^{(n)})_\alpha\otimes S^\lambda$, the operator $A_\alpha$ acts
by a scalar with eigenvalue
\begin{equation}\label{eq:spectrum}
\theta_{\lambda,\alpha}
=
\frac12
\left(
 c_\lambda-
 \sum_{a=0}^{n-1}\alpha_a^2-
 (n-1)m
\right),
\end{equation}
where $c_\lambda$ is given by formula
\eqref{eq:casimir_eigenvalue}. Equivalently,
\begin{equation}\label{eq:spectrum_simplified}
\theta_{\lambda,\alpha}
=
\frac12
\left(
\sum_{a=0}^{n-1}\lambda_a(\lambda_a-2a)
-
\sum_{a=0}^{n-1}\alpha_a^2
\right).
\end{equation}
\end{theorem}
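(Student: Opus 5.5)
The plan is to rewrite the adjacency formula \eqref{adjacency_formula} in terms of the full Casimir image $\Omega$, and then read off the eigenvalue from Lemma~\ref{lem:casimir_eigenvalue} through the Schur--Weyl decomposition \eqref{weight_schur_weyl}.

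First I separate the diagonal terms. We have $\Omega=\sum_{a}\mathcal E_{aa}^2+\sum_{a\ne b}\mathcal E_{ab}\mathcal E_{ba}$. On $\mathcal C_\alpha$ each $\mathcal E_{aa}$ acts by the scalar $\alpha_a$. Also, $\mathcal E_{ab}\mathcal E_{ba}$ preserves the weight space, since it shifts the weight by $\varepsilon_b-\varepsilon_a$ and then by $\varepsilon_a-\varepsilon_b$. Hence on $\mathcal C_\alpha$
$$
\sum_{a\ne b}\mathcal E_{ab}\mathcal E_{ba}=\Omega\big|_{\mathcal C_\alpha}-\Bigl(\sum_a\alpha_a^2\Bigr)I .
$$
Substituting this into \eqref{adjacency_formula} gives
$$
A_\alpha=\tfrac12\Bigl(\Omega-\sum_a\alpha_a^2-(n-1)m\Bigr)\Big|_{\mathcal C_\alpha}.
$$

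Second, I identify how $\Omega$ acts on each block. The isomorphism $W^{\otimes m}\cong\bigoplus_\lambda V_\lambda^{(n)}\otimes S^\lambda$ is one of $\mathfrak{gl}_n\times S_m$-modules. Under it, $\Omega=\rho_m(\Omega_{\mathfrak{gl}_n})$ acts as $\Omega_{\mathfrak{gl}_n}\otimes\mathrm{id}$ on each summand. Lemma~\ref{lem:casimir_eigenvalue} then says it is the scalar $c_\lambda$ on all of $V_\lambda^{(n)}\otimes S^\lambda$, and in particular on its weight-$\alpha$ part $(V_\lambda^{(n)})_\alpha\otimes S^\lambda$. This part is exactly the summand of $\mathcal C_\alpha$ in \eqref{weight_schur_weyl}, because the Cartan elements $\mathcal E_{aa}$ act only on the first tensor factor. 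Combining the two steps yields \eqref{eq:spectrum}.

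Finally, I simplify to get \eqref{eq:spectrum_simplified}. Expanding $c_\lambda=\sum_a\lambda_a^2+(n-1)\sum_a\lambda_a-2\sum_a a\lambda_a$ and using $\sum_a\lambda_a=m$, the term $(n-1)m$ cancels, leaving $\sum_a\lambda_a(\lambda_a-2a)-\sum_a\alpha_a^2$.

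The only point requiring care is the compatibility claim in the second step: the Schur--Weyl isomorphism must intertwine the diagonal $\mathfrak{gl}_n$-action, so that the weight decomposition of $\mathcal C_\alpha$ is compatible with the $\lambda$-blocks. I would justify this by the standard statement that the decomposition is into $\mathfrak{gl}_n\times S_m$ isotypic components. Everything else is bookkeeping.
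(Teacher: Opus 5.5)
Your proposal is correct and follows the paper's proof essentially step for step. You split off $\sum_a\mathcal E_{aa}^2=(\sum_a\alpha_a^2)I$ to write the off-diagonal sum as $\Omega-(\sum_a\alpha_a^2)I$ on $\mathcal C_\alpha$, apply Lemma~\ref{lem:casimir_eigenvalue} on each Schur--Weyl block, and simplify using $\sum_a\lambda_a=m$. You also note explicitly that $\mathcal E_{ab}\mathcal E_{ba}$ preserves $\mathcal C_\alpha$, and that the Schur--Weyl isomorphism is $\mathfrak{gl}_n\times S_m$-equivariant, so the weight-$\alpha$ part lies in the $V_\lambda^{(n)}$ factor; these details are welcome, since the paper leaves them implicit.
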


\begin{proof}
On $\mathcal C_\alpha$ we have
$$
\sum_{a=0}^{n-1}\mathcal E_{aa}^2
=
\left(\sum_{a=0}^{n-1}\alpha_a^2\right)I.
$$
Therefore
$$
\sum_{a\ne b}\mathcal E_{ab}\mathcal E_{ba}
=
\Omega-
\left(\sum_a\alpha_a^2\right)I.
$$
By Lemma~\ref{lem:casimir_eigenvalue}, the operator $\Omega$ acts on
$V_\lambda^{(n)}$ by the scalar $c_\lambda$. Substitution into
\eqref{adjacency_formula} gives \eqref{eq:spectrum}. Since
$\sum_a\lambda_a=m$, formula \eqref{eq:spectrum_simplified} is
equivalent.
\end{proof}

The contribution of a partition $\lambda$ to the multiplicity of the
eigenvalue $\theta_{\lambda,\alpha}$ is
$$
K_{\lambda,\alpha}\dim S^\lambda.
$$
If different partitions give the same eigenvalue, their contributions are
added. Indeed, the dimension of the block
$(V_\lambda^{(n)})_\alpha\otimes S^\lambda$ is
$K_{\lambda,\alpha}\dim S^\lambda$, and Theorem~\ref{spectrum} states that
$A_\alpha$ acts on this whole block by a single scalar.

\begin{theorem}
The operator $A_\alpha$ belongs to the center of the algebra
$\End_{S_m}(\mathcal C_\alpha)$.
\end{theorem}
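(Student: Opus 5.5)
The plan has two parts. First we check that $A_\alpha$ lies in $\End_{S_m}(\mathcal C_\alpha)$. Then we show that it commutes with every element of that algebra.

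For membership, we use relation \eqref{class_sum_adjacency}:
$$
A_\alpha=\pi_\alpha(T_m)-\Bigl(\sum_a\tbinom{\alpha_a}{2}\Bigr)I .
$$
Since $T_m\in Z(\mathbb C[S_m])$, for every $\sigma\in S_m$ we have $\pi_\alpha(\sigma)\pi_\alpha(T_m)=\pi_\alpha(\sigma T_m)=\pi_\alpha(T_m\sigma)=\pi_\alpha(T_m)\pi_\alpha(\sigma)$. Scalars commute with everything, so $A_\alpha$ is $S_m$-equivariant. Alternatively, the Lemma on compatibility of $\pi_m$ and $\rho_m$ shows that each $\mathcal E_{ab}$ commutes with $S_m$. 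Hence so does the right-hand side of \eqref{adjacency_formula}, because $\mathcal C_\alpha$ is $S_m$-invariant.

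For centrality, we want to show that $A_\alpha$ commutes with an arbitrary $B\in\End_{S_m}(\mathcal C_\alpha)$. The cleanest route avoids Schur--Weyl bookkeeping. The element $\pi_\alpha(T_m)$ lies in the image $\pi_\alpha(\mathbb C[S_m])$. Every $B\in\End_{S_m}(\mathcal C_\alpha)$ commutes with $\pi_\alpha(\sigma)$ for all $\sigma$, hence with all of $\pi_\alpha(\mathbb C[S_m])$, and in particular with $\pi_\alpha(T_m)$. Therefore $BA_\alpha=A_\alpha B$. Combined with the first part, this places $A_\alpha$ in the center of $\End_{S_m}(\mathcal C_\alpha)$.

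As a cross-check, and to connect with Theorem~\ref{spectrum}, we would also give the isotypic argument. Under \eqref{weight_schur_weyl}, the $S^\lambda$-isotypic component of $\mathcal C_\alpha$ is $(V_\lambda^{(n)})_\alpha\otimes S^\lambda$. These $S^\lambda$ are pairwise nonisomorphic, so Schur's lemma gives
$$
\End_{S_m}(\mathcal C_\alpha)\cong\bigoplus_\lambda \End\bigl((V_\lambda^{(n)})_\alpha\bigr)\otimes \mathrm{id}_{S^\lambda}.
$$
By Theorem~\ref{spectrum}, $A_\alpha$ acts on each block by the scalar $\theta_{\lambda,\alpha}$. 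It is therefore a linear combination of the central idempotents of this product of matrix algebras, and so it is central. The only step needing care is the identification of isotypic components, specifically that distinct $\lambda$ give nonisomorphic $S^\lambda$. This is standard, so I expect no real obstacle. The main point is simply to notice that $A_\alpha$ differs by a scalar from the image of a central group-algebra element.
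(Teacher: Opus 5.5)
Your proof is correct. Your main argument takes a genuinely different and more elementary route than the paper's proof.

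The paper proves centrality as follows. It uses the weight-space Schur--Weyl decomposition \eqref{weight_schur_weyl} to write $\End_{S_m}(\mathcal C_\alpha)\cong\bigoplus_\lambda\End\bigl((V_\lambda^{(n)})_\alpha\bigr)$. It then invokes Theorem~\ref{spectrum}, i.e.\ the Casimir computation, to see that $A_\alpha$ is scalar on each multiplicity space. This is exactly your ``cross-check''. The paper's subsequent remark does bring in $T_m$, but it still goes through Schur's lemma and the block form $B_\lambda\otimes I_{S^\lambda}$ of equivariant operators.

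Your main argument needs none of this. The operator $\pi_\alpha(T_m)$ lies in the commutant because $T_m$ is central in $\mathbb C[S_m]$. It also lies in $\pi_\alpha(\mathbb C[S_m])$, which commutes with the commutant by definition. Hence $\pi_\alpha(T_m)$ is central in $\End_{S_m}(\mathcal C_\alpha)$, and so is $A_\alpha$ by \eqref{class_sum_adjacency}. This is the general fact that $\pi(Z(\mathbb C[G]))\subseteq Z(\End_G(V))$ for any representation. It needs no semisimplicity and no knowledge of how $\mathcal C_\alpha$ decomposes.

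What the paper's route gives in exchange is structure. It identifies the commutant as a product of matrix algebras and determines the scalar $\theta_{\lambda,\alpha}$ by which $A_\alpha$ acts on each block; your argument alone does not provide this. You also verify explicitly that $A_\alpha\in\End_{S_m}(\mathcal C_\alpha)$, which the paper's block argument leaves implicit.
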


\begin{proof}
From \eqref{weight_schur_weyl} it follows that
$$
\End_{S_m}(\mathcal C_\alpha)
\cong
\bigoplus_{\substack{\lambda\vdash m\\K_{\lambda,\alpha}>0}}
\End((V_\lambda^{(n)})_\alpha).
$$
By Theorem~\ref{spectrum}, the operator $A_\alpha$ is scalar on each
multiplicity space $(V_\lambda^{(n)})_\alpha$. Hence in each matrix block
it belongs to the center, and therefore it is central in the whole direct
sum.
\end{proof}

The centrality of $A_\alpha$ can also be derived directly from
\eqref{class_sum_adjacency}. Indeed, since
$$
T_m\in Z(\mathbb C[S_m]),
$$
by Schur's lemma the element $T_m$ acts by a scalar on each irreducible
$S_m$-module $S^\lambda$. In the isotypic decomposition
$$
\mathcal C_\alpha
\cong
\bigoplus_{\lambda\vdash m,\ \ell(\lambda)\le n}
(V_\lambda^{(n)})_\alpha\otimes S^\lambda
$$
the operator $\pi_\alpha(T_m)$ therefore has the form
$$
\pi_\alpha(T_m)\big|_{(V_\lambda^{(n)})_\alpha\otimes S^\lambda}
=
I_{(V_\lambda^{(n)})_\alpha}
\otimes
\tau_\lambda I_{S^\lambda}
$$
for some scalar $\tau_\lambda$.

On the other hand, each operator
$B\in\End_{S_m}(\mathcal C_\alpha)$ on this isotypic block has the form
$$
B\big|_{(V_\lambda^{(n)})_\alpha\otimes S^\lambda}
=
B_\lambda\otimes I_{S^\lambda},
\qquad
B_\lambda\in
\End\bigl((V_\lambda^{(n)})_\alpha\bigr).
$$
Hence $\pi_\alpha(T_m)$ commutes with every $S_m$-equivariant
endomorphism of the space $\mathcal C_\alpha$ and therefore belongs to the
center of the algebra $\End_{S_m}(\mathcal C_\alpha)$. In view of
\eqref{class_sum_adjacency}, the operator $A_\alpha$ differs from
$\pi_\alpha(T_m)$ only by a scalar operator, and hence is also a central
element of $\End_{S_m}(\mathcal C_\alpha)$.

\subsection{Return to the classical Johnson graph}

For $n=2$ the corresponding partitions have the form
$$
\lambda=(m-j,j),
\qquad
0\le j\le\min(k,m-k).
$$
Then
$$
c_\lambda
=
(m-j)(m-j+1)+j(j-1).
$$
For $\alpha=(m-k,k)$, formula \eqref{eq:spectrum} after simplification
gives
$$
\theta_j
=
(k-j)(m-k-j)-j,
$$
which is the known spectrum of the Johnson graph $J(m,k)$; see, for
example, \cite[Theorem 9.1.2]{BrouwerCohenNeumaier1989}

\section{Graph properties and applications}
\label{sec:applications}

In this section, the spectral results are applied to the study of the
basic combinatorial properties of the graph $J(m;\alpha)$. In particular,
we determine the number of its vertices and its valency, prove
connectivity, and derive the Hoffman bound for independent sets.

\subsection{Number of vertices and valency}

The number of vertices of the graph is equal to the multinomial
coefficient
\begin{equation*}
v_\alpha
=
|V(J(m;\alpha))|
=
\dim\mathcal C_\alpha
=
\frac{m!}{\alpha_0!\cdots\alpha_{n-1}!}.
\end{equation*}

\begin{theorem}
The graph $J(m;\alpha)$ is regular of degree
\begin{equation*}
d_\alpha
=
\sum_{0\le a<b\le n-1}\alpha_a\alpha_b
=
\binom m2-
\sum_{a=0}^{n-1}\binom{\alpha_a}{2}.
\end{equation*}
\end{theorem}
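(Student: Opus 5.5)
The plan is to count neighbours of a fixed word directly, and then to check the count against the spectral description. Fix a word $w$ of profile $\alpha$, that is, a basis graph $x_{S_0,\dots,S_{n-1}}$ with $|S_a|=\alpha_a$. By the definition of $J(m;\alpha)$, the neighbours of $w$ are exactly the words $w^{(e\leftrightarrow f)}$ with $e\in S_a$, $f\in S_b$ and $a\neq b$. As noted in the discussion of the Schreier graph, each neighbour determines the unordered pair $\{e,f\}$ uniquely, namely as the two positions where the words differ. Moreover, interchanging positions carrying different symbols always produces a word distinct from $w$. Hence the neighbours of $w$ are in bijection with the unordered pairs of positions carrying different states. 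The number of such pairs is $\sum_{a<b}|S_a||S_b|=\sum_{a<b}\alpha_a\alpha_b$, which depends only on $\alpha$ and not on $w$. This shows regularity together with the first expression for $d_\alpha$.

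For the second expression, I will split the $\binom m2$ unordered pairs of positions into pairs carrying equal symbols and pairs carrying different symbols. Each state $a$ contributes $\binom{\alpha_a}{2}$ pairs of the first kind, so the number of pairs of the second kind is $\binom m2-\sum_a\binom{\alpha_a}2$. Equivalently, one can use the identity $m^2=\bigl(\sum_a\alpha_a\bigr)^2=\sum_a\alpha_a^2+2\sum_{a<b}\alpha_a\alpha_b$.

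As a consistency check, I would also read the degree off \eqref{class_sum_adjacency}. Applying $\pi_\alpha(T_m)$ to any basis word gives a sum of $\binom m2$ basis words. Of these, $\sum_a\binom{\alpha_a}2$ equal $w$ itself and the rest are exactly its neighbours, so the row sums of $A_\alpha$ are constant and equal to the stated value. Alternatively, the trivial block $\lambda=(m)$ in Theorem~\ref{spectrum} gives
\[
\theta_{(m),\alpha}=\tfrac12\Bigl(m^2-\sum_a\alpha_a^2\Bigr),
\]
which agrees with $d_\alpha$. Here $K_{(m),\alpha}=1$, and the block is spanned by the all-ones vector.

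No step is a real obstacle. The only point needing care is the injectivity claim, namely that distinct transpositions $(ef)$ with differing symbols give distinct neighbours. This holds because the neighbour $w^{(e\leftrightarrow f)}$ differs from $w$ in exactly the two positions $e$ and $f$.
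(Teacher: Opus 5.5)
Your proof is correct and matches the paper's own: count the unordered pairs of positions carrying different symbols to get $\sum_{a<b}\alpha_a\alpha_b$, then obtain the second formula by removing the equal-symbol pairs from all $\binom m2$ pairs. Your explicit injectivity argument and the consistency check via $\theta_{(m),\alpha}=\tfrac12\bigl(m^2-\sum_a\alpha_a^2\bigr)$ are both correct; they are extras that the paper leaves implicit.
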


\begin{proof}
A neighbor of a word is obtained by choosing two positions with different
symbols and exchanging them. For a pair of states $a<b$, there are
$\alpha_a\alpha_b$ such choices. Summing over all pairs gives the first
formula. The second formula is obtained by removing from all $\binom m2$
pairs those pairs in which the symbols are equal.
\end{proof}

\subsection{Connectivity}

If only one coordinate of the profile $\alpha$ is positive, then it is
equal to $m$, and there is only one word of this profile. Hence in this
case $J(m;\alpha)$ consists of one vertex and is trivially connected. The
nontrivial case is described by the following statement.

\begin{theorem}
If at least two coordinates of the profile $\alpha$ are positive, then the
graph $J(m;\alpha)$ is connected.
\end{theorem}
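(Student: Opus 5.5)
I will give a direct combinatorial argument; it is more transparent than a spectral one. The spectral route would need to show that the eigenvalue $d_\alpha$ occurs only for $\lambda=(m)$ and with multiplicity one. Indeed, $K_{(m),\alpha}=1$ and $\dim S^{(m)}=1$, so this block is exactly the span of the all-ones vector. One would then have to verify that $\theta_{\lambda,\alpha}<\theta_{(m),\alpha}=d_\alpha$ for every other $\lambda$ with $K_{\lambda,\alpha}>0$. That inequality is a dominance-order comparison of the Casimir values $c_\lambda$, which is doable but fussier. So I will use the group-theoretic model and the combinatorics of words.

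The plan is to show that every word $w$ of profile $\alpha$ is joined by a path in $J(m;\alpha)$ to the fixed word $w_\alpha$. I will use the fact, already recorded in the construction of the group-theoretic model, that $S_m$ acts transitively on the words of profile $\alpha$. Write $w=\pi_m(\sigma)w_\alpha$ and decompose $\sigma$ as a product of transpositions $\tau_k\cdots\tau_1$. This gives a walk
$$
w_\alpha,\ \pi_m(\tau_1)w_\alpha,\ \pi_m(\tau_2\tau_1)w_\alpha,\ \dots,\ w
$$
in the Schreier graph $\operatorname{Sch}(S_m\curvearrowright S_m/Y_\alpha,\mathcal T_m)$. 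Each step either fixes the current word, when the transposition swaps equal symbols and so gives a loop, or moves to an adjacent vertex of $J(m;\alpha)$. Deleting the loop steps leaves a walk in $J(m;\alpha)$ from $w_\alpha$ to $w$. Hence the underlying simple graph, which is $J(m;\alpha)$ by the earlier discussion, is connected.

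If one prefers an argument avoiding the Schreier graph, I would sort explicitly. Given $w\ne w_\alpha$, take the first position $r$ where $w$ and $w_\alpha$ differ, and let $a$ be the symbol of $w_\alpha$ at $r$. Since the profiles agree and the prefixes up to $r-1$ coincide, the symbol $a$ occurs in $w$ at some later position $s>r$. The symbol $w_r$ differs from $a$, so exchanging positions $r$ and $s$ is an edge of $J(m;\alpha)$. The new word agrees with $w_\alpha$ on one more position, and induction on the length of the agreeing prefix finishes the proof.

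There is no real obstacle here. The only point needing care is the justification that the later position $s$ exists. This follows from comparing the counts of $a$ in the common prefix with $\alpha_a$: the prefix of $w_\alpha$ up to position $r$ contains one more $a$ than the common prefix of length $r-1$, and $w$ has exactly $\alpha_a$ copies of $a$ in total. The hypothesis that two coordinates of $\alpha$ are positive only serves to exclude the trivial one-vertex case discussed before the statement; the argument itself works regardless.
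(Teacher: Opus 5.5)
Your proposal is correct, but your main argument takes a different route from the paper's. You deduce connectivity from the group-theoretic model. $S_m$ acts transitively on words of profile $\alpha$ and is generated by $\mathcal T_m$, so any word is reached from $w_\alpha$ by a walk in the Schreier graph, and dropping the loop steps leaves a walk in $J(m;\alpha)$.

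The paper's proof is purely combinatorial. It argues by induction on the Hamming distance $d_H(u,v)$ between two arbitrary words: pick $s$ with $u_s\neq v_s=a$, then $t$ with $u_t=a\neq v_t$, and swap positions $s,t$ in $u$, which strictly lowers $d_H$. This is essentially your fallback sorting argument. The only differences are that you fix the target $w_\alpha$ and measure progress by the length of the agreeing prefix. Your counting justification for the existence of the later position $s$ never uses that $w_\alpha$ is sorted, so it works for any target word, exactly as in the paper.

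What the Schreier route buys is brevity and generality. It is an instance of the standard fact that the Schreier graph of a transitive action with respect to a generating set is connected. It also reuses the identification of $J(m;\alpha)$ with the loop-deleted Schreier graph established in Section~4.

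What the paper's route buys is self-containedness: it needs neither the coset model nor the fact that transpositions generate $S_m$. It also gives a quantitative by-product. Each swap lowers $d_H$ by at least one, so the graph distance between $u$ and $v$ is at most $d_H(u,v)$, and the diameter is at most $m$.

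Your opening remark about the spectral route is accurate but unused: $\theta_{(m),\alpha}=d_\alpha$, and strict monotonicity of $c_\lambda$ in dominance order would be needed to finish it. You are right that the direct argument avoids this.
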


\begin{proof}
The nontrivial case is when at least two coordinates of the profile are
positive. Let $u$ and $v$ be two words of profile $\alpha$, and let
$$
d_H(u,v)=\#\{s:u_s\ne v_s\}
$$
be their Hamming distance. We argue by induction on $d_H(u,v)$.

If $u\ne v$, choose a position $s$ such that
$u_s=b$, $v_s=a$, and $a\ne b$. Since the number of symbols $a$ in both
words is the same, there exists a position $t\ne s$ such that
$u_t=a$ and $v_t\ne a$. Exchange in $u$ the symbols in positions $s$ and
$t$, and denote the resulting word by $u'$. Then $u'$ is adjacent to $u$,
in position $s$ it already coincides with $v$, and in position $t$ no new
disagreement is created. Hence
$$
d_H(u',v)<d_H(u,v).
$$
By induction, $u'$ is connected to $v$ by a path, and therefore $u$ is
connected to $v$.
\end{proof}

\subsection{The Hoffman bound}
After establishing regularity and connectivity, it is natural to apply
spectral methods to extremal problems on the graph $J(m;\alpha)$. One of
the simplest and most important such problems is estimating the size of an
independent set. In our case, an independent set is a family of colored
graphs of one profile, no two of which are obtained from one another by
interchanging the states of two edge positions. For regular graphs, a
standard tool is the Hoffman bound, which expresses an upper bound in
terms of the smallest eigenvalue of the adjacency matrix.

Assume that the profile $\alpha$ has at least two positive coordinates, so
that $d_\alpha>0$. Denote by
$$
\theta_{\min}
=
\min_{\substack{\lambda\vdash m,\ \ell(\lambda)\le n\\
K_{\lambda,\alpha}>0}}
\theta_{\lambda,\alpha}
$$
the smallest eigenvalue of $A_\alpha$; for a nontrivial graph we have
$\theta_{\min}<0$.

\begin{theorem}\label{thm:hoffman}
For every independent set
$S\subseteq V(J(m;\alpha))$ one has
\begin{equation}\label{eq:hoffman}
|S|
\le
v_\alpha
\frac{-\theta_{\min}}{d_\alpha-\theta_{\min}}.
\end{equation}
\end{theorem}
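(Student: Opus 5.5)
The plan is to prove the standard Hoffman ratio bound directly for the real symmetric matrix $A_\alpha$, using the two facts established earlier: $J(m;\alpha)$ is $d_\alpha$-regular (the valency theorem), and the spectrum of $A_\alpha$ is $\{\theta_{\lambda,\alpha}\}$ with smallest value $\theta_{\min}$ (Theorem~\ref{spectrum} together with the Schur--Weyl decomposition \eqref{weight_schur_weyl}, which ensures every eigenvalue arises this way).

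Let $\mathbf 1$ denote the all-ones vector in $\mathcal C_\alpha$ with respect to the standard basis, and let $\chi_S$ be the characteristic vector of the independent set $S$. By regularity, $A_\alpha\mathbf 1=d_\alpha\mathbf 1$. Put $s=|S|$, $v=v_\alpha$, and consider the vector $x=\chi_S-\frac{s}{v}\mathbf 1$, which is orthogonal to $\mathbf 1$. Independence of $S$ gives $\chi_S^{\top}A_\alpha\chi_S=0$. Expanding, one gets
$$
x^{\top}A_\alpha x
=0-2\frac{s}{v}\,d_\alpha s+\frac{s^2}{v^2}\,d_\alpha v
=-\frac{d_\alpha s^2}{v},
\qquad
x^{\top}x=s-\frac{s^2}{v}.
$$

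Since $A_\alpha$ is real symmetric, its Rayleigh quotient on any nonzero vector is at least $\theta_{\min}$. This gives $-\frac{d_\alpha s^2}{v}\ge\theta_{\min}\bigl(s-\frac{s^2}{v}\bigr)$. If $s=0$ there is nothing to prove. Otherwise, dividing by $s>0$ and rearranging yields $s(d_\alpha-\theta_{\min})\le -\theta_{\min}v$. Because $d_\alpha>0>\theta_{\min}$, the factor $d_\alpha-\theta_{\min}$ is positive, and \eqref{eq:hoffman} follows. The case $x=0$, that is $S$ equal to the whole vertex set, cannot occur: connectivity, or simply $d_\alpha>0$, provides an edge inside it.

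The only point needing care is identifying $\theta_{\min}$ as the true minimal eigenvalue of the adjacency matrix. This follows because \eqref{weight_schur_weyl} is a direct sum decomposition of $\mathcal C_\alpha$, and on each nonzero block (those with $K_{\lambda,\alpha}>0$) Theorem~\ref{spectrum} makes $A_\alpha$ scalar. Hence the eigenvalues of $A_\alpha$ are exactly these $\theta_{\lambda,\alpha}$. The strict negativity $\theta_{\min}<0$ holds because the trace of $A_\alpha$ is zero and $A_\alpha\neq0$. Beyond this identification, the argument is routine.
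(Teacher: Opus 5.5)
Your proof is correct and follows the paper's argument essentially verbatim: your vector $x=\chi_S-\frac{s}{v}\mathbf 1$ is the paper's $z$, and expanding $x^{\top}A_\alpha x$ directly amounts to the paper's observation that the cross terms vanish. The rest is the same Rayleigh-quotient bound followed by division by $|S|$. Your extra justifications fill in points the paper only asserts: that the $\theta_{\lambda,\alpha}$ with $K_{\lambda,\alpha}>0$ exhaust the spectrum, that $\theta_{\min}<0$ by the trace argument, and that $d_\alpha-\theta_{\min}>0$. The remark about $x=0$ is unnecessary, since $x^{\top}A_\alpha x\ge\theta_{\min}\|x\|^2$ holds trivially for $x=0$.
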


\begin{proof}
Let $A$ be the adjacency matrix, let $\mathbf1$ be the vector all of whose
coordinates are equal to $1$, and let $\chi_S$ be the indicator vector of
the set $S$. With respect to the standard Euclidean inner product, the
basis vectors indexed by the vertices of the graph are orthonormal. Since
the graph is $d_\alpha$-regular,
$$
A\mathbf1=d_\alpha\mathbf1.
$$
We have
$$
\langle\chi_S,\mathbf1\rangle=|S|,
\qquad
\langle\mathbf1,\mathbf1\rangle=v_\alpha.
$$
Therefore the orthogonal projection of $\chi_S$ onto $\mathbb C\mathbf1$
is equal to $\frac{|S|}{v_\alpha}\mathbf1$, and
$$
\chi_S
=
\frac{|S|}{v_\alpha}\mathbf1+z,
\qquad
z\perp\mathbf1.
$$

Since $S$ is independent,
$$
\langle A\chi_S,\chi_S\rangle=0.
$$
The cross terms in the orthogonal decomposition vanish, hence
$$
0
=
\frac{|S|^2}{v_\alpha}d_\alpha+
\langle Az,z\rangle.
$$
By the definition of $\theta_{\min}$,
$$
\langle Az,z\rangle
\ge
\theta_{\min}\|z\|^2,
$$
and
$$
\|z\|^2
=
|S|-\frac{|S|^2}{v_\alpha}.
$$
Therefore
$$
0
\ge
\theta_{\min}|S|
+
\frac{|S|^2}{v_\alpha}
(d_\alpha-\theta_{\min}).
$$
For $S\ne\varnothing$ we divide by $|S|$ and obtain
\eqref{eq:hoffman}; for the empty set the assertion is obvious.
\end{proof}

For $n=2$ and $\alpha=(m-k,k)$ the eigenvalues satisfy
$$
\theta_{j+1}-\theta_j=2j-m,
$$
and therefore the smallest eigenvalue is attained at
$j=\min(k,m-k)$ and is equal to
$$
\theta_{\min}=-\min(k,m-k).
$$
Thus Theorem~\ref{thm:hoffman} becomes the classical spectral bound for
families of $k$-element subsets no two of which differ by replacing one
element. Equivalently, this is a bound for a constant-weight code with
minimum Hamming distance at least $4$
\cite{GodsilMeagher}.

\subsection{Example $J(3;(1,1,1))$}
We conclude the section with the smallest nontrivial example in which the
multicolored nature of the construction already appears.
Let $N=3$, $n=3$, and $\alpha=(1,1,1)$. Then $m=3$,
$$
v_\alpha
=
|V(J(3;(1,1,1)))|
=
\dim\mathcal C_\alpha
=
\frac{3!}{1!1!1!}=6,
$$
and
$$
d_\alpha=1\cdot1+1\cdot1+1\cdot1=3.
$$
For $n=3$ we have
$$
c_\lambda
=
\lambda_0(\lambda_0+2)+
\lambda_1^2+
\lambda_2(\lambda_2-2).
$$
Since $\sum_a\alpha_a^2=3$ and $(n-1)m=6$,
$$
\theta_{\lambda,\alpha}
=
\frac12(c_\lambda-9).
$$
For the weight $(1,1,1)$, the Kostka number is equal to the number of
standard tableaux of shape $\lambda$, that is,
$K_{\lambda,(1,1,1)}=\dim S^\lambda$. We obtain
$$
\begin{array}{c|c|c|c|c}
\lambda & c_\lambda & \theta_{\lambda,\alpha}
& K_{\lambda,\alpha}\dim S^\lambda & \text{multiplicity}\\ \hline
(3) & 15 & 3 & 1\cdot1 & 1\\
(2,1) & 9 & 0 & 2\cdot2 & 4\\
(1,1,1) & 3 & -3 & 1\cdot1 & 1
\end{array}
$$
Hence
$$
\operatorname{Spec}J(3;(1,1,1))
=
\{3^1,0^4,(-3)^1\}.
$$
The vertices can be identified with the permutations of the symbols
$0,1,2$. Each transposition changes the parity of a permutation, and
therefore the graph is bipartite with two parts of three vertices each.
Since it is $3$-regular, we have
$$
J(3;(1,1,1))\cong K_{3,3}.
$$
The Hoffman bound gives
$$
|S|
\le
6\frac{3}{3+3}=3,
$$
and this bound is sharp.

\section{Three-state edges and the action of $\mathfrak{sl}_3$}
\label{sl3}
In this section we consider the simplest nontrivial case of the general
construction, namely the case of three edge states. It already differs
from the classical two-state model of simple graphs, but remains simple
enough to write all operators explicitly.

Let $n=3$. The states of an edge position have the meaning
$$
0=\text{absent edge},
\qquad
1=\text{first color},
\qquad
2=\text{second color}.
$$
A basis graph is given by a partition
$$
E=S_0\sqcup S_1\sqcup S_2
$$
and is denoted by $x_{S_0,S_1,S_2}$. The six root operators have the
interpretations
$$
\mathcal E_{10}:0\mapsto1,
\qquad
\mathcal E_{01}:1\mapsto0,
$$
$$
\mathcal E_{20}:0\mapsto2,
\qquad
\mathcal E_{02}:2\mapsto0,
$$
$$
\mathcal E_{12}:2\mapsto1,
\qquad
\mathcal E_{21}:1\mapsto2.
$$
For the Cartan elements
$$
H_1=\mathcal E_{00}-\mathcal E_{11},
\qquad
H_2=\mathcal E_{11}-\mathcal E_{22}
$$
we have
$$
H_1x_{S_0,S_1,S_2}
=(|S_0|-|S_1|)x_{S_0,S_1,S_2},
$$
$$
H_2x_{S_0,S_1,S_2}
=(|S_1|-|S_2|)x_{S_0,S_1,S_2}.
$$

\subsection{Symmetrized elements}

For $i,j\ge0$, $i+j\le m$, put
$$
a_{i,j}
=
\sum_{\substack{|S_1|=i,\ |S_2|=j\\S_1\cap S_2=\varnothing}}
 x_{E\setminus(S_1\cup S_2),S_1,S_2}.
$$
If $i<0$, $j<0$, or $i+j>m$, we set $a_{i,j}=0$. Denote
$$
\mathcal S_m^{(3)}
=
\Span\{a_{i,j}:i,j\ge0,\ i+j\le m\}.
$$
This is precisely the subspace of $S_m$-invariant elements in
$(\mathbb C^3)^{\otimes m}$: only the profile $(m-i-j,i,j)$ is retained.

\begin{theorem}\label{thm:sl3_symmetrized}
The subspace $\mathcal S_m^{(3)}$ is $\mathfrak{sl}_3$-invariant, and
\begin{gather*}
\mathcal E_{01}(a_{i,j})=(m-i-j+1)a_{i-1,j},
\qquad
\mathcal E_{10}(a_{i,j})=(i+1)a_{i+1,j},\\
\mathcal E_{02}(a_{i,j})=(m-i-j+1)a_{i,j-1},
\qquad
\mathcal E_{20}(a_{i,j})=(j+1)a_{i,j+1},\\
\mathcal E_{12}(a_{i,j})=(i+1)a_{i+1,j-1},
\qquad
\mathcal E_{21}(a_{i,j})=(j+1)a_{i-1,j+1}.
\end{gather*}
Moreover,
$$
H_1(a_{i,j})=(m-2i-j)a_{i,j},
\qquad
H_2(a_{i,j})=(i-j)a_{i,j}.
$$
\end{theorem}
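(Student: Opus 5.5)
The plan is to prove one general counting identity and read all six formulas off it as special cases. For a profile $\beta=(\beta_0,\beta_1,\beta_2)$ write $a_\beta$ for the sum of all basis words of profile $\beta$, so that $a_{i,j}=a_{(m-i-j,i,j)}$, and set $a_\beta=0$ whenever some coordinate of $\beta$ is negative. The identity to prove is
$$
\mathcal E_{ab}(a_\beta)=(\beta_a+1)\,a_{\beta+\varepsilon_a-\varepsilon_b},\qquad a\ne b .
$$

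I would prove it with a double-counting argument in the word model, using the explicit formula for $\mathcal E_{ab}$ on words from the preliminaries. Expanding $\mathcal E_{ab}(a_\beta)$ gives a sum over all pairs $(w,s)$, where $w$ has profile $\beta$ and $s$ is a position with $w_s=b$; each pair contributes the word $u$ obtained by replacing $w_s$ with $a$. Every such $u$ has profile $\beta'=\beta+\varepsilon_a-\varepsilon_b$. Conversely, given any word $u$ of profile $\beta'$, the pairs $(w,s)$ producing it correspond bijectively to the positions $s$ with $u_s=a$: set $w_s=b$ and leave the rest unchanged. There are $\beta'_a=\beta_a+1$ such positions, so every word of profile $\beta'$ appears with the same coefficient $\beta_a+1$.

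The only point needing care, and the one I expect to be the mild obstacle, is the boundary cases. If $\beta_b=0$, the left side is $0$ because there is nothing to change, and the right side is $0$ by the convention $a_{\beta'}=0$, since $\beta'_b=-1$. If $\beta$ itself has a negative coordinate, both sides vanish trivially. Keeping this convention consistent is what lets the displayed formulas hold for every $i,j$ without exceptions.

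Specializing to the six ordered pairs gives the stated formulas.
\begin{itemize}
\item For $(a,b)=(0,1)$ and $(0,2)$ the coefficient is $\beta_0+1=m-i-j+1$.
\item For $(1,0)$ and $(1,2)$ it is $\beta_1+1=i+1$.
\item For $(2,0)$ and $(2,1)$ it is $\beta_2+1=j+1$.
\end{itemize}
In each case the shifted index is the one shown in the statement.

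For the Cartan elements, $\mathcal E_{aa}$ acts on $a_\beta$ by the scalar $\beta_a$, so $H_1=\mathcal E_{00}-\mathcal E_{11}$ acts by $(m-i-j)-i=m-2i-j$. Similarly, $H_2=\mathcal E_{11}-\mathcal E_{22}$ acts by $i-j$.

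Finally, the six root operators together with $H_1$ and $H_2$ span $\mathfrak{sl}_3$, and each of them maps every $a_{i,j}$ into $\mathcal S_m^{(3)}$. Hence $\mathcal S_m^{(3)}$ is $\mathfrak{sl}_3$-invariant. As a cross-check, invariance also follows from the lemma that $\rho_m(X)$ commutes with $\pi_m(\sigma)$: this lemma shows that $\rho_m(X)$ preserves the $S_m$-invariants, and these invariants are exactly $\mathcal S_m^{(3)}$.
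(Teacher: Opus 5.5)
Your proposal is correct and follows the paper's argument: both count the preimages of each word of the target profile $\beta+\varepsilon_a-\varepsilon_b$ via its $\beta_a+1$ positions in state $a$, and both read off the Cartan action directly from the profile. Your single identity $\mathcal E_{ab}(a_\beta)=(\beta_a+1)\,a_{\beta+\varepsilon_a-\varepsilon_b}$, together with the explicit check of the zero conventions, is the same proof stated uniformly for all six root operators, where the paper works one case and says the rest are similar.
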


\begin{proof}
Consider, for example, $\mathcal E_{01}$. Its target profile is
$(m-i-j+1,i-1,j)$. For a fixed basis element of this profile, a preimage
is obtained by choosing one of the $m-i-j+1$ zero positions and changing
its state to $1$. Therefore each target element arises with multiplicity
$m-i-j+1$, whence
$$
\mathcal E_{01}(a_{i,j})=(m-i-j+1)a_{i-1,j}.
$$
For $\mathcal E_{10}$, each target element has $i+1$ positions in state
$1$, each of which could have been zero in the preimage; hence
$$
\mathcal E_{10}(a_{i,j})=(i+1)a_{i+1,j}.
$$
The same counts for the transitions $2\leftrightarrow0$ and
$1\leftrightarrow2$ give the remaining four formulas. The diagonal
formulas follow from the profile $(m-i-j,i,j)$. All images belong to
$\mathcal S_m^{(3)}$, and hence this subspace is invariant.
\end{proof}

\subsection{Relation with the symmetric power}
The obtained formulas have a standard interpretation in terms of the
polynomial model of representations. The space generated by the elements
$a_{i,j}$ can be identified with the space of homogeneous polynomials of
degree $m$ in three variables. Under this identification, the numbers of
edges in the states $0,1,2$ correspond to the degrees of the variables
$u_0,u_1,u_2$.

\begin{theorem}
As an $\mathfrak{sl}_3$-module,
$$
\mathcal S_m^{(3)}\cong\Sym^m(\mathbb C^3).
$$
\end{theorem}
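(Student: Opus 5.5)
I will construct an explicit linear isomorphism
$$
\Phi:\mathcal S_m^{(3)}\longrightarrow\Sym^m(\mathbb C^3)=\mathbb C[u_0,u_1,u_2]_m
$$
and check that it intertwines the actions of the root operators. The generators $\mathcal E_{ab}$ ($a\ne b$) generate $\mathfrak{sl}_3$, and $H_1,H_2$ are their commutators, so checking the six root operators is enough.

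The action on the polynomial side is the standard action of $\mathfrak{gl}_3$ on $\Sym^m(\mathbb C^3)$ by derivations: $E_{ab}$ acts as $u_a\,\partial/\partial u_b$. On monomials this gives
$$
u_a\partial_{u_b}\bigl(u_0^{p}u_1^{i}u_2^{j}\bigr)
$$
equal to the exponent of $u_b$ times the monomial with $u_b$-degree lowered by one and $u_a$-degree raised by one. Comparing with Theorem~\ref{thm:sl3_symmetrized}, the coefficients there are the exponent of the \emph{target} state plus one, so the correct normalization involves multinomial coefficients. The cleanest choice is to use the tensor-power model directly. The map
$$
W^{\otimes m}\to\Sym^m(W),\qquad v_{c_1}\otimes\cdots\otimes v_{c_m}\mapsto u_{c_1}\cdots u_{c_m},
$$
is a $\mathfrak{gl}_3$-homomorphism, since it is the quotient by the $S_m$-action and the diagonal action is compatible by the Lemma on commuting actions. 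It sends $a_{i,j}$ to $\binom{m}{m-i-j,\,i,\,j}u_0^{m-i-j}u_1^iu_2^j$. Its restriction to the $S_m$-invariants $\mathcal S_m^{(3)}$ is a $\mathfrak{gl}_3$-map. It is injective because distinct $a_{i,j}$ go to nonzero multiples of distinct monomials. It is surjective because both spaces have dimension $\binom{m+2}{2}$ and every monomial is hit.

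As a sanity check I will verify one formula by hand. We have $\Phi(a_{i,j})=\frac{m!}{p!i!j!}u_0^pu_1^iu_2^j$ with $p=m-i-j$. Applying $u_1\partial_{u_0}$ gives $\frac{m!}{(p-1)!\,i!\,j!}u_0^{p-1}u_1^{i+1}u_2^j$. This equals $(i+1)\cdot\frac{m!}{(p-1)!(i+1)!j!}u_0^{p-1}u_1^{i+1}u_2^j=(i+1)\Phi(a_{i+1,j})$, matching $\mathcal E_{10}(a_{i,j})=(i+1)a_{i+1,j}$. The other five formulas follow by the same computation, and the Cartan formulas follow from the degrees.

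The main obstacle is the normalization: a naive identification $a_{i,j}\mapsto u_0^{m-i-j}u_1^iu_2^j$ does not intertwine, because the coefficients in Theorem~\ref{thm:sl3_symmetrized} are not the derivation coefficients. The multinomial rescaling (equivalently, the symmetrization map) fixes this, and I would justify it conceptually through the $\mathfrak{gl}_3$-equivariance of the projection $W^{\otimes m}\to\Sym^m W$ rather than relying only on the coefficient check. As an alternative confirmation, $\mathcal S_m^{(3)}$ is the $\lambda=(m)$ isotypic block in \eqref{weight_schur_weyl} summed over all weights, since $S^{(m)}$ is trivial. It is therefore $V_{(m)}^{(3)}\cong\Sym^m(\mathbb C^3)$.
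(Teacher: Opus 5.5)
Your proof is correct and uses the same isomorphism as the paper up to the harmless factor $m!$. The paper sends $a_{i,j}$ to the divided-power monomial $\frac{u_0^{m-i-j}}{(m-i-j)!}\frac{u_1^{i}}{i!}\frac{u_2^{j}}{j!}$, which is $\frac{1}{m!}$ times your $\binom{m}{m-i-j,\,i,\,j}u_0^{m-i-j}u_1^{i}u_2^{j}$, and then checks the intertwining against Theorem~\ref{thm:sl3_symmetrized} one formula at a time, whereas you get equivariance directly from the Leibniz rule for the multiplication map $W^{\otimes m}\to\Sym^m W$, which makes the paper's ``all other formulas are obtained similarly'' step unnecessary.
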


\begin{proof}
Define
$$
\Phi(a_{i,j})
=
\frac{u_0^{m-i-j}}{(m-i-j)!}
\frac{u_1^i}{i!}
\frac{u_2^j}{j!}.
$$
The images of the elements $a_{i,j}$ form a divided-power basis in the
space of homogeneous polynomials of degree $m$. In the polynomial model,
$E_{ab}$ acts as $u_a\frac{\partial}{\partial u_b}$.

For example,
$$
u_0\frac{\partial}{\partial u_1}
\left(
\frac{u_0^{m-i-j}}{(m-i-j)!}
\frac{u_1^i}{i!}
\frac{u_2^j}{j!}
\right)
=
\frac{u_0^{m-i-j+1}}{(m-i-j)!}
\frac{u_1^{i-1}}{(i-1)!}
\frac{u_2^j}{j!}.
$$
Since
$$
\frac{u_0^{m-i-j+1}}{(m-i-j)!}
=
(m-i-j+1)
\frac{u_0^{m-i-j+1}}{(m-i-j+1)!},
$$
we have
$$
u_0\frac{\partial}{\partial u_1}\Phi(a_{i,j})
=
(m-i-j+1)\Phi(a_{i-1,j}).
$$
This corresponds to the formula
$$
\mathcal E_{01}(a_{i,j})
=
(m-i-j+1)a_{i-1,j}.
$$
All other formulas of Theorem~\ref{thm:sl3_symmetrized} are obtained
similarly.

Thus $\Phi$ is not only an isomorphism of vector spaces, but also an
isomorphism of $\mathfrak{sl}_3$-modules.
\end{proof}

Thus the constructed symmetrized subspace $\mathcal S_m^{(3)}$ realizes
the irreducible $\mathfrak{sl}_3$-module
$
\Sym^m(\mathbb C^3).
$
This is one of the simplest families of irreducible
$\mathfrak{sl}_3$-modules, whose weight diagrams have triangular form.
At the same time, general irreducible $\mathfrak{sl}_3$-modules have
weight diagrams of hexagonal form, \cite{FultonHarris}. Their realization
within the graph model requires a broader construction than the
symmetrized subspace considered here.

\section{Conclusions}

In this paper, we constructed natural actions of $\mathfrak{gl}_n$ and
$\mathfrak{sl}_n$ on the space of $(n-1)$-colored graphs on a fixed
vertex set. Graphs of a fixed profile form the weight space
$\mathcal C_\alpha$, while transpositions of the states of two edge
positions define the graph $J(m;\alpha)$ on its standard basis. This graph
is the known transposition graph on the multislice; the proposed model
relates it to local edge-state-changing operators.

The main operator result has the form
$$
A_\alpha
=
\frac12
\left(
\sum_{a\ne b}\mathcal E_{ab}\mathcal E_{ba}
-(n-1)mI
\right)\Bigg|_{\mathcal C_\alpha}.
$$
Using the quadratic Casimir of $\mathfrak{gl}_n$ and the Schur--Weyl
decomposition, this formula yields the complete spectrum, its
multiplicities, and the centrality of $A_\alpha$ in
$\End_{S_m}(\mathcal C_\alpha)$. For $n=2$, the construction reduces to
the classical Johnson graph and the standard $\mathfrak{sl}_2$-action on
the Boolean lattice.

As graph-theoretic consequences, formulas for the number of vertices and
the valency were obtained, a self-contained combinatorial proof of
connectivity was given, and the Hoffman bound was applied to independent
sets. The three-state case shows that the symmetrized subspace naturally
realizes the irreducible $\mathfrak{sl}_3$-module
$\Sym^m(\mathbb C^3)$. Further research may be directed toward the
interaction of the constructed action with relabeling the vertices of the
underlying graph and toward realizing broader families of irreducible
$\mathfrak{sl}_n$-modules in spaces of colored graphs.  
Further research may also address Markov chains generated by random
state transpositions on the multislice, including the determination of
their spectral gaps and mixing properties using the operator framework
developed here.


\begin{thebibliography}{99}

\bibitem{BannaiIto1984}
E.~Bannai and T.~Ito,
\textit{Algebraic Combinatorics I: Association Schemes},
Benjamin/Cummings, Menlo Park, 1984.

\bibitem{BedratyukGraphAlgebraI}
L.~Bedratyuk,
\textit{The Graph Algebra I: Representation-Theoretic Structure},
arXiv:2606.29558, 2026.

\bibitem{BrouwerCohenNeumaier1989}
A.~E. Brouwer, A.~M. Cohen and A.~Neumaier,
\textit{Distance-Regular Graphs},
Springer-Verlag, Berlin, 1989.

\bibitem{CeccheriniSilbersteinScarabottiTolli}
T.~Ceccherini-Silberstein, F.~Scarabotti and F.~Tolli,
\textit{Harmonic Analysis on Finite Groups: Representation Theory,
Gelfand Pairs and Markov Chains},
Cambridge University Press, Cambridge, 2008.

\bibitem{Chase1973}
P.~J. Chase,
\textit{Transposition graphs},
SIAM Journal on Computing, \textbf{2} (1973), no.~2, 128--133.

\bibitem{Delsarte1973}
P.~Delsarte,
\textit{An algebraic approach to the association schemes of coding theory},
Philips Research Reports Supplements, no.~10, 1973, 1--97.

\bibitem{Feinsilver2012}
P.~Feinsilver,
\textit{Representations of $\mathfrak{sl}_2$ in the Boolean lattice, and
the Hamming and Johnson schemes},
Infinite Dimensional Analysis, Quantum Probability and Related Topics,
\textbf{15} (2012), no.~3, 1250019.

\bibitem{FilmusODonnellWu2022}
Y.~Filmus, R.~O'Donnell and X.~Wu,
\textit{Log-Sobolev inequality for the multislice, with applications},
Electronic Journal of Probability, \textbf{27} (2022), Paper No.~33,
1--30.

\bibitem{FultonHarris}
W.~Fulton and J.~Harris,
\textit{Representation Theory: A First Course},
Springer-Verlag, New York, 1991.

\bibitem{FultonYoungTableaux}
W.~Fulton,
\textit{Young Tableaux: With Applications to Representation Theory and
Geometry},
Cambridge University Press, Cambridge, 1997.

\bibitem{GodsilMeagher}
C.~Godsil and K.~Meagher,
\textit{Erd\H{o}s--Ko--Rado Theorems: Algebraic Approaches},
Cambridge University Press, Cambridge, 2016.



\bibitem{Sabatini2022}
L.~Sabatini,
\textit{Random Schreier graphs and expanders},
Journal of Algebraic Combinatorics, \textbf{56} (2022), 889--901.



\bibitem{Humphreys}
J.~E. Humphreys,
\textit{Introduction to Lie Algebras and Representation Theory},
Springer-Verlag, New York, 1972.

\bibitem{Ruskey1988}
F.~Ruskey,
\textit{A Hamilton path in the transposition graph for multiset permutations},
Congressus Numerantium, \textbf{67} (1988), 27--34.

\bibitem{Stanley1988}
R.~P. Stanley,
\textit{Differential posets},
Journal of the American Mathematical Society, \textbf{1} (1988), no.~4,
919--961.

\bibitem{TarnanenAaltonenGoethals1985}
H.~Tarnanen, M.~J. Aaltonen and J.-M.~Goethals,
\textit{On the nonbinary Johnson scheme},
European Journal of Combinatorics, \textbf{6} (1985), no.~3, 279--285.

\end{thebibliography}
\end{document}